\pgfplotsset{compat = newest}
\newcommand{\F}{\mathcal F}
\newcommand{\I}{\mathcal I}
\newcommand{\N}{\mathbb N}
\newcommand{\R}{\mathbb R}
\newcommand{\C}{\mathbb C}
\newcommand{\K}{\mathcal K}
\newcommand{\Lim}{\operatorname{Lt}}
\newcommand{\Li}{\operatorname{Li}}
\newcommand{\Ls}{\operatorname{Ls}}
\newcommand{\Fix}{\operatorname{Fix}}
\renewcommand{\phi}{\varphi}
\newcommand{\w}{\omega}
\newcommand{\im}{\mathrm{im}}
\renewcommand{\int}{\mathrm{int}}
\newtheorem{theorem}{Theorem}[section]
\newtheorem*{theorem*}{Main theorem} 
\newtheorem{lemma}[theorem]{Lemma}
\newtheorem{corollary}[theorem]{Corollary}
\theoremstyle{definition}
\newtheorem{definition}[theorem]{Definition}
\newtheorem{remark}[theorem]{Remark}
\newtheorem{example}[theorem]{Example}
\title{Pointwise attractors which are not strict}
\author{Magdalena Nowak}
\address{M.Nowak: Mathematics Department, Jan Kochanowski University in Kielce, ul. Uniwersytecka 7, 25-406 Kielce, Poland}
\email{magdalena.nowak805@gmail.com}
\subjclass[2020]{Primary: 28A80; Secondary: 37C25, 37C70, 51F99, 54C05,  54D30}
\keywords{pointwise attractor, strict attractor, iterated function system, IFS-attractor, basin of attraction, pointwise basin, attracting map, local repellor, fractal}
\begin{document}
\begin{abstract}
We deal with the finite family $\F$ of continuous maps on the Hausdorff space $X$. A~nonempty compact subset $A$ of such space is called a strict attractor if it has an open neighborhood $U$ such that $A=\lim_{n\to\infty}\F^n(S)$ for every nonempty compact $S\subset U$.  Every strict attractor is a pointwise attractor, which means that the set $\{x\in X ; \lim_{n\to\infty}\F^n(x)=A\}$ contains $A$ in its interior.  

We present a class of examples of pointwise attractors -- from the finite set to the Sierpiński carpet -- which are not strict when we add to the system one nonexpansive map.
\end{abstract}
\maketitle

\section{Introduction}\label{intro}

Consider the following general version of an iterated function system.

\begin{definition}
	A pair $(X,\F)$ is called an \textit{iterated function system} (short IFS) if $X$ is a Hausdorff space and $\F$ is a finite family of continuous maps $X\to X$. By $\K(X)$ we denote the hyperspace of every nonempty compact subsets of $X$ with the Vietoris topology. Define also a \textit{Barnsley-Hutchinson} operator $\F\colon\K(X)\to\K(X)$ associated with the IFS $(X,\F)$ such that for every $S\in\K(X)$
	$$\F(S)=\bigcup_{f\in\F}f(S).$$
	We also use this formula for arbitrary $S\subset X$ and, for singletons we will write $\F^n(x)$ instead of $\F^n(\{x\})$.
\end{definition}
Following by the paper \cite{BLR} we define several two version of attractors for a given IFS $(X,\F)$.

\begin{definition}
	A \textit{strict attractor} of IFS $(X,\F)$ is a set $A\in\K(X)$ which has an open neighborhood $U$, such that $A=\lim_{n\to\infty}\F^n(S)$ for every $S\in\K(U)$. The maximal open set $U$ with the above property is called the \textit{basin} of the attractor $A$. 
\end{definition}

In the classical theory of IFS, where $\F$ is a family of contractions on the complete metric space $X$, due to the Hutchinson theorem, the whole space $X$ is the basin of the unique attractor of the given IFS. For example the singleton $\{0\}$ is a strict attractor of IFS $(\R,\{\frac{x}2\})$, the unit interval $[0,1]$ is a strict attractor of $(\R,\{\frac{x}2, \frac{x+1}2\})$ and the unit square $[0,1]^2$  is a strict attractor of $(\C,\{\frac{x}2, \frac{x+1}2, \frac{x+i}2, \frac{x+1+i}2\})$.

\begin{definition}
	A \textit{pointwise attractor} of IFS $(X,\F)$ is a set $A\in\K(X)$ satisfying $A\subset \int~B_p(A,\F)$, where 
	$$B_p(A,\F)=\{x\in X ; \lim_{n\to\infty}\F^n(x)=A\}$$
	is called a \textit{pointwise basin} of the attractor $A$.
\end{definition} 

Both attractors defined above are fixed points of operator $\F$ and their basins are open (see \cite{Bar}). Moreover, the space homeomorphic to such set remains an attractor of the corresponding IFS. It is easy to note that every strict attractor is pointwise one. Barnsley, Leśniak and Rypka shown in \cite{BLR} that the converse implication is true if $\F$ contains only nonexpansive maps on the metric space, but it is not true in general. The first counterexample, given by D. Kwietniak, is the following.
 
\begin{example}\label{ex_kwietniak}
	Let $S^1=\{z\in\C; |z|=1\}$ be the unit circle which may be projected to the set $\hat{\R}=\R\cup\{\infty\}$. Consider $\hat{\R}$ with an Euclidean metric on the circle and define a continuous map $\phi\colon \hat{\R}\to \hat{\R}$ such that $\phi(x)=x+1$ for $x\neq \infty$ and $\phi(\infty)=\infty$. Note that the singleton $\{\infty\}$ 
	is a pointwise attractor of $(\hat{\R},\{\phi\})$ 
	and $B_p(\{\infty\},\{\phi\})=\hat{\R}$. However, it is not a strict attractor of such IFS, cause $\{\infty\}\neq \lim_{n\to\infty}\phi^n(K)$ for every compact set $K=\{\infty\}\cup(-\infty,x]$.
\end{example}

In this paper we describe a wider class of counterexamples like that. In most of them only one map in the IFS is not nonexpansive. 

\section{Preliminaries and notation}

Consider the space $\K(X)$ for a given Hausdorff space $X$. We endow it with the Vietoris topology $\tau_V$ generated by the basis of sets
$$\langle U_1,\dots,U_n\rangle := \{A\in\K(X); A\subset\bigcup_{i=1}^nU_i \text{ and }A\cap U_i\neq\emptyset \text{ for every }i=1,\dots,n\}$$
where $U_1,\dots, U_n$ are open subsets of $X$. In this topology $\K(X)$ is a Hausdorff space (see \cite{Top}).

If $X$ is compact, then the convergence in the Vietoris topology in $\K(X)$ is equivalent with the Kuratowski convergence (so-called L-convergence), cf. \cite{Nad}. We say that a set $A\subset X$ \textit{is a Kuratowski limit (or topological limit)} of the sequence $(A_n)_{n\in\N}$ of subsets of $X$ if $A=\Li A_n=\Ls A_n$ where:
\begin{itemize}
	\item $\Li A_n = \{x\in X;$ for every open neighborhood $U$ of $x, A_n\cap U\neq\emptyset$ for almost every $n\in\N \}$
	\item $\Ls A_n = \{x\in X;$ for every open neighborhood $U$ of $x, A_n\cap U\neq\emptyset$ for infinitely many $n\in\N \}.$
\end{itemize}
We denote this by $A=\Lim A_n$.

Note that $x\in\Ls A_n$ iff there exists $(a_n)_{n\in\N}$ such that $a_n\in A_n$ and its subsequence $(a_{n_k})_{k\in\N}$ such that $x= \lim_{k\to\infty}a_{n_k}$. The following properties can be easily proved or found in \cite{Nad, Top}.
	\begin{remark}\label{remlim}
		For every sequences $(A_n)_{n\in\N}$, $(B_n)_{n\in\N}$ and $(C_n)_{n\in\N}$ of subsets of topological space $X$ 
	\begin{enumerate}[(i)]
		\item \label{rem_reg} if $X$ is a regular Hausdorff space then $$A=\lim_{n\to\infty}A_n \text{ in the Vietoris topology }\Rightarrow A=\Lim A_n$$
		\item if $X$ is a compact Hausdorff space then $$A=\lim_{n\to\infty}A_n \text{ in the Vietoris topology }\Leftrightarrow A=\Lim A_n$$
		\item \label{rem_compsup} if $D\in\K(X)$ and $A_n\in\K(D)$ for every $n$ then $$A=\lim_{n\to\infty}A_n \text{ in the Vietoris topology }\Leftrightarrow A=\Lim A_n$$
		\item \label{rem_union} if there exists $\lim_{n\to\infty} A_n$ and $\lim_{n\to\infty} B_n$ in the Vietoris topology then $$\lim_{n\to\infty}(A_n\cup B_n)=\lim_{n\to\infty} A_n \cup \lim_{n\to\infty} B_n$$
		
		
		\item\label{rem_subset} if $A_n\subset B_n$ for every $n$ then $\Li A_n \subset \Li B_n$ and $\Ls A_n \subset \Ls B_n$
		
		\item \label{rem_trzyciagi} if $A_n\subset B_n\subset C_n$ and $\lim_{n\to\infty} A_n=\lim_{n\to\infty} C_n$ then $\lim_{n\to\infty} B_n = \lim_{n\to\infty} A_n$.
	\end{enumerate}
\end{remark}



\section{Attracting maps with a local repellor}\label{sec_pf}

Let $\phi:X\to X$ be a continuous map on the Hausdorff space $X$. 
By $\Fix(\phi)$ we denote the set of fixed points of the function $\phi$. It is a closed set by continuity of $\phi$. 

\begin{definition}
	We say that a point $a\in\Fix(\phi)$ is a \textit{local repellor of} $\phi$ if it has a sequence $(x_n)_{n\in\N}$ in $X$ converges to the point $a$, called a \textit{witnessing sequence}, such that $x_0\notin\Fix(\phi)$ and $\phi(x_{n+1})=x_n$ for every $n\in\N$.
\end{definition}
Note that, in fact, no point in the witnessing sequence is a fixed point of $\phi$. 

\begin{theorem}\label{thm_rep}
	If for a set $A\subset X$, the map $\phi:X\to X$ has a local repellor $a\in A$ with the witnessing sequence $(x_n)_{n\in\N}$ such that $x_0\notin A$, then $A$ cannot be a strict attractor for the IFS containing $\phi$.
\end{theorem}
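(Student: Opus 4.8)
The plan is to argue by contradiction: assume $A$ is a strict attractor and let $U$ be its basin, an open neighborhood of $A$. The idea is to manufacture a single compact set $S\subset U$ whose forward images under $\F$ all retain the point $x_0\notin A$; this membership will obstruct the convergence $\F^n(S)\to A$.

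First I would locate a tail of the witnessing sequence inside the basin. Since $a\in A\subset U$, the set $U$ is open, and $x_n\to a$, there is an index $N$ with $x_n\in U$ for all $n\geq N$. I would then set
$$S=\{a\}\cup\{x_n; n\geq N\},$$
which is compact as a convergent sequence together with its limit, and satisfies $S\subset U$, so $S\in\K(U)$.

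The key algebraic observation is that the backward-orbit relation $\phi(x_{n+1})=x_n$ telescopes to $x_0=\phi^m(x_m)$ for every $m$. Because $\phi\in\F$, a routine induction gives $\phi^m(S)\subset\F^m(S)$, and since $x_m\in S$ for every $m\geq N$, it follows that $x_0=\phi^m(x_m)\in\F^m(S)$ for all $m\geq N$. Thus $x_0$ is a common member of every sufficiently late iterate.

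It remains to derive a contradiction from $\lim_{m\to\infty}\F^m(S)=A$ together with $x_0\notin A$. The main obstacle is that $X$ is only assumed Hausdorff, so I cannot freely pass to the Kuratowski limit through the regularity clause of Remark~\ref{remlim} nor through its compact-superset clause. Instead I would work directly in the Vietoris topology: since $A$ is compact and $x_0\notin A$, separate $x_0$ from $A$ in the Hausdorff space $X$ by disjoint open sets $W\ni x_0$ and $V\supset A$ (cover $A$ by basic separating sets and extract a finite subcover). Then $\langle V\rangle$ is a Vietoris neighborhood of $A$, so $\F^m(S)\subset V$ for all large $m$; but $x_0\in\F^m(S)\cap W$ and $W\cap V=\emptyset$ force $x_0\notin V$, a contradiction. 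Carrying out this point--compact-set separation inside the Vietoris topology, rather than routing through Kuratowski limits, is the delicate part of the argument.
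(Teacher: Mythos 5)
Your proposal is correct and follows essentially the same route as the paper: both take the compact set consisting of $a$ together with a tail of the witnessing sequence inside the neighborhood, observe that $x_0=\phi^n(x_n)\in\F^n(K)$ for all large $n$, and conclude that $\F^n(K)$ cannot converge to $A$ since $x_0\notin A$. The only difference is that you spell out the final step (point--compact-set separation in the Hausdorff space and the upper Vietoris neighborhood $\langle V\rangle$), which the paper leaves implicit in the phrase ``otherwise $x_0$ must be an element of $A$.''
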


\begin{proof}
	By the fact that $a=\lim_{n\to\infty}x_n$, for every open $U\supset A$ there exists $n_0\in\N$ such that $x_n\in U$ for $n\geq n_0$. Then the set $K:=\{a\}\cup\bigcup_{n\geq n_0}\{x_n\}$ is a compact subset of $U$. If the map $\phi$ is contained in some finite family $\F$ of continuous maps $X\to X$, then 
	$$x_0=\phi^n(x_n)\subset \phi^n(K)\subset \F^n(K)$$ 
	for every $n\geq n_0$.
	Therefore $A\neq\lim_{n\to\infty}\F^n(K)$, otherwise
	$x_0$ must be an element of $A$.
\end{proof}	

As a consequence of that theorem 
we immediately obtain
\begin{corollary}\label{col_rep}
	If the map $\phi$ has a local repellor, then the set $\Fix(\phi)$ is not a strict attractor of any IFS containing $\phi$.
\end{corollary}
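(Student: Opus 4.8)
The plan is to derive this immediately from Theorem~\ref{thm_rep} by choosing the right set $A$ and verifying that the hypotheses of the theorem are met. Since $\phi$ has a local repellor, there is some point $a\in\Fix(\phi)$ together with a witnessing sequence $(x_n)_{n\in\N}$ satisfying $x_0\notin\Fix(\phi)$ and $\phi(x_{n+1})=x_n$ for every $n$. The natural candidate is to set $A:=\Fix(\phi)$, which is a closed (hence, if compact, a legitimate candidate) subset of $X$.

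First I would note that $a\in\Fix(\phi)=A$ by the definition of local repellor, so $a\in A$ as required by Theorem~\ref{thm_rep}. Second, I would verify the crucial condition $x_0\notin A$: this is precisely the hypothesis $x_0\notin\Fix(\phi)$ built into the definition of a local repellor, and since $A=\Fix(\phi)$, we have $x_0\notin A$. Thus the same sequence $(x_n)_{n\in\N}$ serves as a witnessing sequence for the local repellor $a\in A$ with $x_0\notin A$.

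Having checked both hypotheses, I would then simply invoke Theorem~\ref{thm_rep}: the map $\phi$ has a local repellor $a\in A$ with witnessing sequence $(x_n)_{n\in\N}$ and $x_0\notin A$, so $A=\Fix(\phi)$ cannot be a strict attractor of any IFS containing $\phi$. This completes the argument.

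I expect no real obstacle here, since the corollary is essentially a direct specialization of the theorem to $A=\Fix(\phi)$; the only point requiring a moment's care is matching the two occurrences of the condition involving $x_0$ — namely recognizing that the definition's requirement $x_0\notin\Fix(\phi)$ is exactly the theorem's requirement $x_0\notin A$ once we make the identification $A=\Fix(\phi)$. One subtlety worth a remark is whether $\Fix(\phi)$ must be taken as the whole fixed-point set or whether the statement should be read for an arbitrary compact attractor candidate equal to $\Fix(\phi)$; in either reading the witnessing sequence and the point $a$ transfer verbatim, so the deduction goes through unchanged.
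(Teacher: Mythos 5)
Your proposal is correct and is precisely the specialization the paper intends: the paper states the corollary follows ``immediately'' from Theorem~\ref{thm_rep}, and your argument---taking $A=\Fix(\phi)$, noting $a\in\Fix(\phi)=A$ and that the definition's condition $x_0\notin\Fix(\phi)$ becomes exactly the theorem's hypothesis $x_0\notin A$---is that immediate deduction spelled out. Nothing is missing.
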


The map $\phi$ from Kwietniak's Example \ref{ex_kwietniak} satisfies the above corollary cause the only fixed point $\infty$ is also a local repellor of $\phi$ with a witnessing sequence $(-n)_{n\in\N}$. This function is also obeys the following definition which guarantees, in that case, that $\{\infty\}$ is a pointwise attractor.

\begin{definition}\label{def_pf}
	A continuous map $\phi\colon X\to X$ on a Hausdorff space $X$ is called an \textit{attracting map} when for every $x\in X$ there exists a limit $\lim_{n\to\infty}\phi^n(x)\in \Fix(\phi)$. 
\end{definition}

 We will use attracting maps to preserve a pointwise attractivity in the following way.
\begin{theorem}
	If $\phi$ is an attracting map on $X$ and $\Fix(\phi)$ is a pointwise attractor of IFS $(X,W)$ such that $W(X)\subset\Fix(\phi)$, then the set $\Fix(\phi)$ is a pointwise attractor of IFS $(X,W\cup\{\phi\})$.
\end{theorem}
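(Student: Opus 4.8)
The plan is to write $A=\Fix(\phi)$ and $\G=W\cup\{\phi\}$, and to prove the inclusion of pointwise basins $B_p(A,W)\subset B_p(A,\G)$; since $A\subset\int B_p(A,W)$ by hypothesis and the interior operation is monotone, this at once gives $A\subset\int B_p(A,W)\subset\int B_p(A,\G)$, which is exactly the assertion that $A$ is a pointwise attractor of $(X,\G)$. I record at the outset that $A\in\K(X)$ is compact (being a pointwise attractor of $(X,W)$), that $\phi$ fixes $A$ pointwise, and that $W(X)\subset A$.

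The key structural fact I would establish is a decomposition of the one-point orbits under $\G$: for every $x\in X$ and every $n$,
\[
\G^n(x)=\{\phi^n(x)\}\cup B_n,\qquad B_n\subset A .
\]
Indeed, a point of $\G^n(x)$ has the form $g_n\circ\cdots\circ g_1(x)$ with each $g_i\in W\cup\{\phi\}$. If every $g_i=\phi$ we obtain $\phi^n(x)$; otherwise let $j$ be the largest index with $g_j\in W$. Then $g_j(g_{j-1}\circ\cdots\circ g_1(x))\in A$ because $W(X)\subset A$, and the remaining maps $g_{j+1}=\cdots=g_n=\phi$ fix $A$ pointwise, so the whole word lands in $A$. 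Thus every point of $\G^n(x)$ except possibly $\phi^n(x)$ lies in $A$. This is precisely where the hypotheses $W(X)\subset\Fix(\phi)$ and $\phi|_A=\id$ are used.

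Next I would fix $x\in B_p(A,W)$ and compute $\Lim\G^n(x)$. For the lower inclusion, monotonicity of the operator $W$ gives $W^n(x)\subset\G^n(x)$; since $W(X)\subset A$ these sets lie in $\K(A)$, so Remark \ref{remlim}(\ref{rem_compsup}) turns the Vietoris convergence $W^n(x)\to A$ (valid because $x\in B_p(A,W)$) into $\Lim W^n(x)=A$, whence Remark \ref{remlim}(\ref{rem_subset}) yields $A=\Li W^n(x)\subset\Li\G^n(x)$. For the upper inclusion I combine the decomposition with the attracting hypothesis. Set $a_x=\lim_{n\to\infty}\phi^n(x)\in A$. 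If $y\in\Ls\G^n(x)$, choose a selection $a_n\in\G^n(x)$ with a subsequence $a_{n_k}\to y$; then either infinitely many of the $a_{n_k}$ lie in the closed set $A$, forcing $y\in A$, or from some point on $a_{n_k}=\phi^{n_k}(x)\to a_x\in A$, again forcing $y\in A$. Hence $\Ls\G^n(x)\subset A$, and the two inclusions give $\Lim\G^n(x)=A$.

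Finally I would upgrade this Kuratowski limit to a Vietoris limit. The decomposition shows $\G^n(x)\subset D:=A\cup\{\phi^m(x):m\in\N\}$ for all $n$, and $D$ is compact, being the union of the compact set $A$ with the convergent sequence $\{\phi^m(x):m\in\N\}\cup\{a_x\}$ (whose limit $a_x$ already lies in $A$). Since each $\G^n(x)\in\K(D)$, Remark \ref{remlim}(\ref{rem_compsup}) converts $\Lim\G^n(x)=A$ into $\lim_{n\to\infty}\G^n(x)=A$ in the Vietoris topology, that is, $x\in B_p(A,\G)$. This establishes $B_p(A,W)\subset B_p(A,\G)$ and finishes the proof. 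The main obstacle is not any single estimate but the bookkeeping around the Vietoris/Kuratowski equivalence in a space that is only assumed Hausdorff: the whole argument is arranged so that the relevant orbits stay inside fixed compact sets ($A$ for the $W$-orbit, $D$ for the $\G$-orbit), since confinement to a compact set is the only mechanism available here for passing between the two notions of convergence.
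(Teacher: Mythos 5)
Your proof is correct and takes essentially the same approach as the paper: the paper's one-line proof rests on exactly your sandwich $W^n(x)\subset \F^n(x)\subset \Fix(\phi)\cup\{\phi^n(x)\}$, concluded by citing the squeeze property of Remark \ref{remlim}~(\ref{rem_trzyciagi}) together with the attracting hypothesis $\phi^n(x)\to a_x\in\Fix(\phi)$. The only difference is that you unpack that squeeze by hand (lower bound via Kuratowski limits inside $A$, upper bound via the limit of $\phi^n(x)$, and the Kuratowski-to-Vietoris upgrade inside the compact set $D$), supplying details the paper delegates to the remark.
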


\begin{proof}
	Let $\F:=W\cup\{\phi\}$. We obtain the assertion by the fact that   
	$$W^n(x)\subset \F^n(x) \subset \Fix(\phi)\cup\{\phi^n(x)\}$$
	for every $x\in B_p(\Fix(\phi),W)$ and $n\in\N$, and by the squeezing argument (cf. Remark \ref{remlim} (\ref{rem_trzyciagi})).
\end{proof}

These results reveal the importance of attracting maps with local repellor, so called \textit{ALR-maps}.

\begin{corollary}\label{col_main}
	If $\phi$ is an attracting map with local repellor (ALR-map) such that $\Fix(\phi)$ is a pointwise attractor of IFS $(X,W)$ and $W(X)\subset\Fix(\phi)$, then the set $\Fix(\phi)$ is a pointwise but not strict attractor of IFS $(X,W\cup\{\phi\})$.
\end{corollary}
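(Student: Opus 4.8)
The plan is to split the assertion into its two halves and settle each by quoting one of the two results established just before the corollary; nothing new needs to be built. First I would observe that an ALR-map is, by its very definition, an attracting map. Hence the three hypotheses required by the theorem immediately preceding this corollary are satisfied verbatim: $\phi$ is attracting, $\Fix(\phi)$ is a pointwise attractor of the IFS $(X,W)$, and $W(X)\subset\Fix(\phi)$. Applying that theorem yields at once that $\Fix(\phi)$ is a pointwise attractor of the enlarged IFS $(X,W\cup\{\phi\})$.

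For the remaining half I would use the second defining property of an ALR-map, namely that it possesses a local repellor. Since $W\cup\{\phi\}$ is a finite family of continuous maps that contains $\phi$, the IFS $(X,W\cup\{\phi\})$ is an IFS containing $\phi$, and so Corollary \ref{col_rep} applies directly and shows that $\Fix(\phi)$ is \emph{not} a strict attractor of $(X,W\cup\{\phi\})$. Combining the two conclusions gives exactly the statement: $\Fix(\phi)$ is a pointwise, but not strict, attractor of $(X,W\cup\{\phi\})$.

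I do not expect any genuine obstacle here, as both ingredients are already in hand; the corollary is essentially a bookkeeping of hypotheses. The only thing worth verifying explicitly is the compatibility of the two sets of assumptions with the single notion of an ALR-map, namely that the two properties bundled into ``ALR'' (being attracting and admitting a local repellor) are precisely what the preceding theorem and Corollary \ref{col_rep} respectively demand, and that the adjoined system $W\cup\{\phi\}$ simultaneously still contains $\phi$ (needed for Corollary \ref{col_rep}) and inherits the pointwise-attractor structure from $(X,W)$ (needed for the preceding theorem). Once this is noted, the two cited results close the argument with no further computation.
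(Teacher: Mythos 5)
Your proposal is correct and matches the paper's intent exactly: the corollary is stated there without proof precisely because it is the immediate conjunction of the preceding theorem (pointwise attractivity of $\Fix(\phi)$ for $(X,W\cup\{\phi\})$, using that an ALR-map is attracting and that $W(X)\subset\Fix(\phi)$) and Corollary \ref{col_rep} (non-strictness, using the local repellor and the fact that $W\cup\{\phi\}$ contains $\phi$). Nothing is missing.
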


\subsection{Examples of ALR-maps}
Let us present several examples of attracting maps with local repellor on $\R$, the unit circle $S^1$ and the unit ball in  $\C$.

Maps $x^2$ and $\sqrt x$ are simple examples of ALR-maps on the unit interval. In the following lemma we characterize a wider class of attracting maps with local repellor on the closed intervals in $\R$.

\begin{lemma}\label{pfonR}
	For a given interval $[a,b]\subset\R$, each continuous map $\phi\colon[a,b]\to[a,b]$ where $\Fix(\phi)=\{a,b\}$, is an ALR-map on $[a,b]$.
\end{lemma}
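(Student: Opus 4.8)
The plan is to exploit that $\phi(x)-x$ has no zeros in the open interval $(a,b)$. Since $\phi$ is continuous and $\Fix(\phi)=\{a,b\}$, the continuous function $g(x):=\phi(x)-x$ vanishes exactly at the endpoints, so by the intermediate value theorem it has constant sign on $(a,b)$. I would therefore split into the two symmetric cases $\phi(x)>x$ for all $x\in(a,b)$ and $\phi(x)<x$ for all $x\in(a,b)$, treating the first and invoking symmetry for the second (conjugate by the reflection $r(x)=a+b-x$, which is an involution swapping the endpoints, so that $r\circ\phi\circ r$ falls into the first case and the ALR-property transfers back along the homeomorphism $r$). Assume from now on that $\phi(x)>x$ on $(a,b)$.

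To verify that $\phi$ is attracting, fix $x\in(a,b)$ and consider the forward orbit $x_n:=\phi^n(x)$. As long as $x_n\in(a,b)$ one has $a<x_n<\phi(x_n)=x_{n+1}\le b$, so the orbit is strictly increasing and bounded above by $b$; hence it converges to some $L\in(a,b]$ (and if the orbit ever equals $b$ it is constant thereafter, still converging to $b$). Passing to the limit in $x_{n+1}=\phi(x_n)$ and using continuity gives $\phi(L)=L$, so $L\in\Fix(\phi)$, and since $L>a$ this forces $L=b$. At the endpoints $\phi^n(a)=a$ and $\phi^n(b)=b$ are constant, so every orbit converges to a fixed point and $\phi$ is an attracting map in the sense of Definition \ref{def_pf}.

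It remains to exhibit a local repellor, which I claim is $a$. I would build the witnessing sequence by backward iteration. Start with any $x_0\in(a,b)$; given $x_n\in(a,b)$, observe that $\phi(a)=a<x_n<\phi(x_n)$, so the intermediate value theorem applied to $\phi$ on $[a,x_n]$ yields a point $x_{n+1}\in(a,x_n)$ with $\phi(x_{n+1})=x_n$. This produces a strictly decreasing sequence $x_0>x_1>x_2>\cdots>a$ with $\phi(x_{n+1})=x_n$ and $x_0\notin\Fix(\phi)$, exactly as the definition of a witnessing sequence demands. Being bounded below, it converges to some $L'\in[a,x_0)$; passing to the limit in $\phi(x_{n+1})=x_n$ gives $\phi(L')=L'$, and since $L'<b$ we conclude $L'=a$. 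Thus $(x_n)_{n\in\N}$ witnesses that $a$ is a local repellor, so $\phi$ is an ALR-map.

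The main obstacle is the backward construction in the last paragraph: one must guarantee both that a preimage strictly between $a$ and $x_n$ always exists and that the resulting decreasing sequence actually lands on the repelling endpoint $a$ rather than stalling at an interior point. The first is precisely what the sign condition $\phi(x)>x$ secures through the intermediate value theorem, while the second follows because the limit of such a backward orbit is again a fixed point, forcing it into $\Fix(\phi)=\{a,b\}$ and, by the bound $L'<b$, onto $a$.
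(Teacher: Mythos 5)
Your proof is correct and follows essentially the same route as the paper's: constant sign of $\phi(x)-x$ on $(a,b)$ via the intermediate value theorem, monotone forward orbits converging to a fixed point for attractivity, and a backward orbit built by IVT whose limit must again be a fixed point, giving the local repellor. The only cosmetic difference is that you treat the case $\phi(x)>x$ with repellor $a$ and dispose of the other case by conjugating with the reflection $x\mapsto a+b-x$, whereas the paper treats $\phi(x)<x$ with repellor $b$ directly and handles the mirror case ``analogously.''
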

\begin{proof} Note that if $a$ and $b$ are the only fixed points of $\phi$, then $\phi(x)<x$ [or $\phi(x)>x$] for every $x\in(a,b)$. 
	
	For proving attractivity of $\phi$ observe that the sequence $(\phi^n(x))_{n\in\N}$ starting from arbitrary $x\in[a,b]$ is 
	decreasing [respectively increasing] 
	or constant for $n$ big enough. This means that there exist $g=\lim_{n\to\infty}\phi^n(x)$ which has to be a fixed point of $\phi$,	
	so $g\in\{a,b\}$. 
	
	Next, we show that $b$ is a local repellor if $\phi(x)<x$ for every $x\in(a,b)$. Indeed, we can construct a witnessing sequence $(x_n)_{n\in\N}$ starting from the arbitrary $x_0\in(a,b)$. If $x_n\in (a,b)$, then
	$$x_n\in[x_n,b]\subset[\phi(x_n),\phi(b)]\subset\phi([x_n,b]).$$ Therefore, for any natural $n$ we can find $x_{n+1}\in[x_n,b]$ such that $\phi(x_{n+1})=x_n$. Note that $x_n\neq b$ for every $n\in\N$. Otherwise for some $N\in\N$ we would have $x_0=\phi^N(b)=b$ which contradicts $x_0\in(a,b)$. 
	Thus we construct increasing sequence $(x_n)_{n\in\N}$ in $[x_0,b)$ which should have a limit in the set of fixed points of $\phi$. Hence $b=\lim_{n\to\infty}x_{n}$.\\
Analogously we can proof that, if $\phi(x)>x$ for every $x\in(a,b)$, then $a$ is a local repellor of $\phi$.
\end{proof}

Therefore we have the ensuing
 
\begin{example}\label{ex_A}
	On the arbitrary interval $[a,b]$ in $\R$ the following functions are attracting maps with a local repellor:
	$$\frac{(x-a)^2}{b-a}+a \quad\text{ or } \quad\sqrt{(x-a)(b-a)}+a.$$ 
\end{example}

In the Figure \ref{wykresy4} we present graphs of several ALR-maps on $[a,b]$ and diagrams of their dynamics: black points $a$ and $b$ are fixed points and
other elements from the interval are attracted to the fixed points according to the arrows.

\begin{figure}[h]
	
	\includegraphics[scale=0.6]{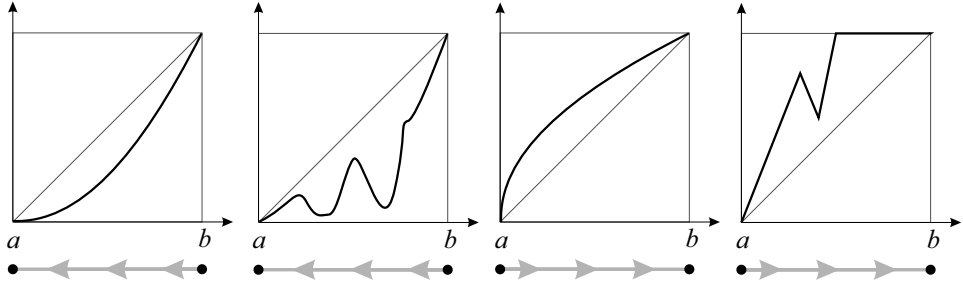}
	
	\caption{Examples of ALR-maps on the interval $[a,b]$.}\label{wykresy4}
\end{figure}

The following two lemmas will help us construct other ALR-maps.
 
\begin{lemma}\label{union}
	Assume that, for every index $i\in I$, the function $\phi_i$ is an attracting map on the Hausdorff space $X_i$ and
	\begin{enumerate}[(i)]
		\item $\phi_{i_0}$ has a local repellor for some $i_0\in I$
		\item\label{as_fix} $X_i\cap X_j\subset \Fix(\phi_i)\cap\Fix(\phi_j)$ for distinct $i,j\in I$.
	\end{enumerate}
	Then the union of maps $\Phi=\bigcup_{i\in I}\phi_i$ (the function $\Phi\colon \bigcup_{i\in I}X_i\to \bigcup_{i\in I}X_i$ such that
	$\Phi|_{X_i}= \phi_i$ for every $i\in I$) is an ALR-map and $\Fix(\Phi)=\bigcup_{i\in I}\Fix(\phi_i)$.  
\end{lemma}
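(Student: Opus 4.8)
The plan is to reduce every assertion about $\Phi$ to the corresponding property of a single piece $\phi_i$, exploiting that under $\Phi$ an orbit never leaves the set $X_i$ in which it starts. First I would check that $\Phi$ is well defined: for a point $x\in X_i\cap X_j$ with $i\neq j$, hypothesis (\ref{as_fix}) forces $\phi_i(x)=x=\phi_j(x)$, so the prescriptions $\Phi|_{X_i}=\phi_i$ agree on overlaps, and $\Phi\colon\bigcup_i X_i\to\bigcup_i X_i$ is a genuine map with $\Phi|_{X_i}=\phi_i$ for every $i$.

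The technical heart, and the step I expect to be the main obstacle, is the continuity of $\Phi$ on the union $X=\bigcup_i X_i$, since the lemma as stated does not pin down the topology of $X$. Each restriction $\Phi|_{X_i}=\phi_i$ is continuous and the prescriptions agree on the intersections, so this is precisely the hypothesis of the pasting (gluing) lemma. I would carry out the argument under the assumption that $\{X_i\}_{i\in I}$ is a closed, locally finite cover of the Hausdorff space $X$ (in particular this covers the finite $I$ used in the applications): for a closed $C\subset X$ one has $\Phi^{-1}(C)=\bigcup_i\phi_i^{-1}(C)$, each $\phi_i^{-1}(C)$ is closed in $X_i$ and hence in $X$, and a locally finite union of closed sets is closed, so $\Phi^{-1}(C)$ is closed and $\Phi$ is continuous. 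That $X$ be Hausdorff is needed for $\Phi$ to qualify as an attracting map in the sense of Definition \ref{def_pf}.

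Next I would record the orbit-confinement observation, which makes the remaining claims immediate: if $x\in X_i$, then $\Phi^n(x)=\phi_i^n(x)$ for every $n\in\N$. This follows by induction, the only point being that once $\Phi^n(x)=\phi_i^n(x)$ lies in $X_i$, the equality $\Phi(\Phi^n(x))=\phi_i(\Phi^n(x))$ holds simply because $\Phi|_{X_i}=\phi_i$, regardless of whether the point also lies in some other $X_j$. The identity $\Fix(\Phi)=\bigcup_i\Fix(\phi_i)$ then drops out at once: the inclusion $\supset$ is clear since $\Phi|_{X_i}=\phi_i$, and conversely any $x$ with $\Phi(x)=x$ lies in some $X_i$, whence $\phi_i(x)=\Phi(x)=x$.

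Finally I would deduce the two defining properties of an ALR-map. For attractivity, given $x\in X_i$ the confinement observation gives $\lim_n\Phi^n(x)=\lim_n\phi_i^n(x)$, and this limit exists and lies in $\Fix(\phi_i)\subset\Fix(\Phi)$ because $\phi_i$ is attracting; convergence in the subspace $X_i$ yields convergence in $X$ to the same point. For the local repellor, I would transport the witnessing sequence $(x_n)_{n\in\N}$ of $\phi_{i_0}$ guaranteed by hypothesis (i): it lies in $X_{i_0}$, so $\Phi(x_{n+1})=\phi_{i_0}(x_{n+1})=x_n$ and $a=\lim_n x_n$, while $x_0\notin\Fix(\phi_{i_0})$ forces $x_0\notin\Fix(\Phi)$ (otherwise $\phi_{i_0}(x_0)=\Phi(x_0)=x_0$). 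Hence $a$ witnesses that $\Phi$ has a local repellor, and combined with attractivity this shows $\Phi$ is an ALR-map.
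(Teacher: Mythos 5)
Your proof is correct and follows the same basic strategy as the paper's: check well-definedness via hypothesis (ii), identify $\Fix(\Phi)$, verify attractivity by confining orbits to the piece where they start, and transport the witnessing sequence of $\phi_{i_0}$. The difference is one of completeness. The paper's proof dismisses the fixed-point identity and attractivity as ``easy to show'' and never addresses continuity of $\Phi$ at all, even though continuity is part of the definition of an attracting map and the lemma as stated does not pin down the topology on $\bigcup_{i\in I}X_i$. You correctly isolate this as the real issue and resolve it with the pasting lemma under the added hypothesis that $\{X_i\}_{i\in I}$ is a closed, locally finite cover; this hypothesis holds in every application the paper makes of the lemma (finitely many closed intervals or arcs plus a closed remainder), so nothing is lost, though strictly speaking you are proving a slightly restricted version of the stated lemma. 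Two smaller points in your favor: the orbit-confinement induction $\Phi^n(x)=\phi_i^n(x)$ for $x\in X_i$ is exactly the right way to make the ``easy'' claims precise, and your repellor argument is more direct than the paper's --- the paper invokes hypothesis (ii) to show the witnessing sequence avoids every $X_j$ with $j\neq i_0$, whereas you only need $\Phi|_{X_{i_0}}=\phi_{i_0}$ to conclude $\Phi(x_0)=\phi_{i_0}(x_0)\neq x_0$, so condition (ii) is used only where it is genuinely needed, namely for well-definedness.
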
	

\begin{proof}
	The map $\Phi$ is well defined, cause for $x\in X_i\cap X_j$ we have $\Phi(x)=\phi_i(x)=\phi_j(x)=x$. It is easy to show that $\Fix(\Phi)=\bigcup_{i\in I}\Fix(\phi_i)$ and $\Phi$ is an attracting map.
	
	Note also that the local repellor $a$ of the map $\phi_{i_0}$ is also a local repellor of $\Phi$. Indeed, the point $a\in\Fix(\phi_{i_0})\subset \Fix(\Phi)$ is a limit of the witnessing sequence $(x_n)_{n\in\N}$ such that $x_n\in X_{i_0}\setminus \Fix(\phi_{i_0})$ for every $n\in\N$, and, by the condition (\ref{as_fix}), $x_n\notin X_j$ for any $j\neq i_0$. Thus $x_0\notin\Fix(\Phi)$ and $\Phi(x_{n+1})=\phi_{i_0}(x_{n+1})=x_n$.
\end{proof}

Now we can construct an ALR-map on the arbitrary subset of $\R$ containing a closed interval. It is enough to take a union of the ALR-map (from Lemma \ref{pfonR}) on this interval and the identity map (which is always an attracting map) on the rest of the space. See Figure \ref{schematy2}. 

\begin{figure}[h]
	\includegraphics[scale=0.6]{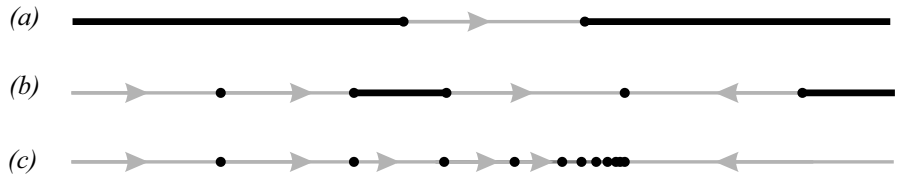}
	\caption{Diagrams of sample ALR-maps on the real line. Black areas indicate the action of the identity map.}\label{schematy2}
\end{figure}

The proof of next result is standard and left to the reader.

\begin{lemma}\label{hom}
	For an ALR-map $\phi$ on the Hausdorff space $X$ and for homeomorphism $h\colon X\to Y$, the composition $h\circ\phi\circ h^{-1}$ is also an ALR-map on $Y$.   
\end{lemma}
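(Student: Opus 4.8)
The plan is to verify the three defining properties of an ALR-map directly for the conjugated map $\psi := h\circ\phi\circ h^{-1}$ on $Y$, namely that it is continuous, attracting, and possesses a local repellor. Continuity is immediate since $\psi$ is a composition of continuous maps. The heart of the argument is to exploit the fact that conjugation by a homeomorphism intertwines the dynamics of $\phi$ and $\psi$ completely, and in particular transports fixed points and orbits in a one-to-one manner. First I would record the elementary but crucial observation that $\psi^n = h\circ\phi^n\circ h^{-1}$ for every $n\in\N$, which follows by an easy induction since the inner $h^{-1}\circ h$ factors cancel. From this I would deduce $\Fix(\psi) = h(\Fix(\phi))$: a point $y$ is fixed by $\psi$ precisely when $h^{-1}(y)$ is fixed by $\phi$, because $h$ is a bijection.

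Next I would establish that $\psi$ is an attracting map. Fix an arbitrary $y\in Y$ and set $x := h^{-1}(y)\in X$. Since $\phi$ is attracting, the limit $g := \lim_{n\to\infty}\phi^n(x)$ exists and lies in $\Fix(\phi)$. Because $h$ is continuous, applying it commutes with the limit, so
$$\lim_{n\to\infty}\psi^n(y) = \lim_{n\to\infty}h(\phi^n(x)) = h(g) \in h(\Fix(\phi)) = \Fix(\psi).$$
This shows every point of $Y$ converges under iteration of $\psi$ to a fixed point of $\psi$, which is exactly the attracting-map property from Definition \ref{def_pf}.

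Finally I would produce a local repellor for $\psi$. Let $a\in\Fix(\phi)$ be a local repellor of $\phi$ with witnessing sequence $(x_n)_{n\in\N}$, so that $x_0\notin\Fix(\phi)$ and $\phi(x_{n+1})=x_n$ for all $n$, with $x_n\to a$. I claim $b := h(a)$ is a local repellor of $\psi$ with witnessing sequence $(h(x_n))_{n\in\N}$. Indeed $b\in\Fix(\psi)$ by the identification above; continuity of $h$ gives $h(x_n)\to h(a)=b$; applying $h$ to the relation $\phi(x_{n+1})=x_n$ yields $\psi(h(x_{n+1})) = h(\phi(x_{n+1})) = h(x_n)$; and since $x_0\notin\Fix(\phi)$ and $h$ is injective, $h(x_0)\notin h(\Fix(\phi))=\Fix(\psi)$. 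Thus all conditions of the local-repellor definition are met.

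I do not expect any genuine obstacle here, which is consistent with the paper's remark that the proof is standard. The only point requiring a modicum of care is ensuring that each step uses exactly the property of $h$ it needs — continuity of $h$ for passing limits inside, and injectivity of $h$ for transporting the non-fixed-point condition and for the fixed-point identification $\Fix(\psi)=h(\Fix(\phi))$. Since $h$ is a homeomorphism it enjoys both, so the conjugated map inherits every structural feature of $\phi$ verbatim.
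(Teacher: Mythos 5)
Your proof is correct and complete: the conjugation identities $\psi^n = h\circ\phi^n\circ h^{-1}$ and $\Fix(\psi)=h(\Fix(\phi))$, together with continuity of $h$ for transporting limits and injectivity for transporting the non-fixed-point condition, are exactly the standard argument the paper has in mind when it declares the proof ``standard and left to the reader.'' Since the paper gives no written proof, there is nothing to diverge from; your write-up simply supplies the omitted details faithfully.
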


This allows us to generate an ALR-map, among others, on every arc (see Fig. \ref{krzywe}). 

\begin{figure}[h]
	\includegraphics[scale=0.6]{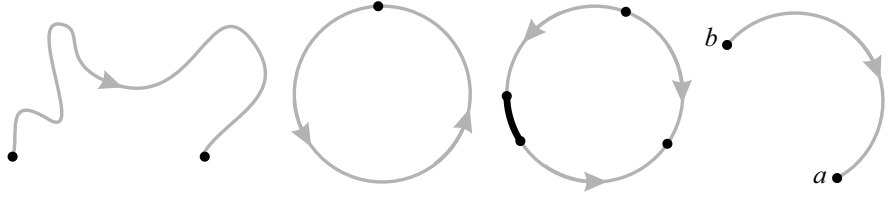}
	\caption{Diagrams of sample ALR-maps on curves.}\label{krzywe}
\end{figure}

\begin{example}\label{pfarc}
	Using the homeomorphism $h\colon[0,2\pi)\to S^1$ of the formula $h(x)=e^{ix}$, we can construct an ALR-map on the arbitrary 
	arc with endpoints $a,b\in S^1$, like in the Figure \ref{krzywe}. Suppose that  $\alpha=arg(a), \beta=arg(b)$ are in $[0,2\pi)$. Then we have the following example of the attracting map $\phi$ with a local repellor $b$:
	$$\phi(x) = \exp\big(i\big(\frac{(\arg x-\alpha)^2}{\beta-\alpha}+\alpha\big)\big),$$
	where "$-$" is an operation modulo $2\pi$ and for $\alpha=\beta$ we take $\beta-\alpha=2\pi$. 
\end{example}
The similar maps are proposed in 2020 by Fitzsimmons and Kunze in \cite{FK}. 

\begin{example}\label{ex:circ}
	We can also construct an ALR-map on the closed unit ball $B(0,1)\subset\C$. First define two continuous maps $a, b\colon B(0,1)\to S^1$ such that for every $x+iy\in B(0,1)$
	$$a(x+iy)=x-i\sqrt{1-x^2}\in S^1$$
	$$b(x+iy)=x+i\sqrt{1-x^2}\in S^1.$$ 
	Thus, for each $z\in B(0,1)$ points $a(z), z, b(z)$ are collinear. Then, the bijection 	
	$$\phi(z)=\Big|\frac{z-a(z)}{b(z)-a(z)}\Big|(z-a(z))+a(z)$$
	is an ALR-map on $B(0,1)$, constant on the circle $S^1$ (see the first diagram in the Figure \ref{fig:disc_tri_sqr}).
\end{example}

\begin{figure}[h]
	\includegraphics[scale=0.65]{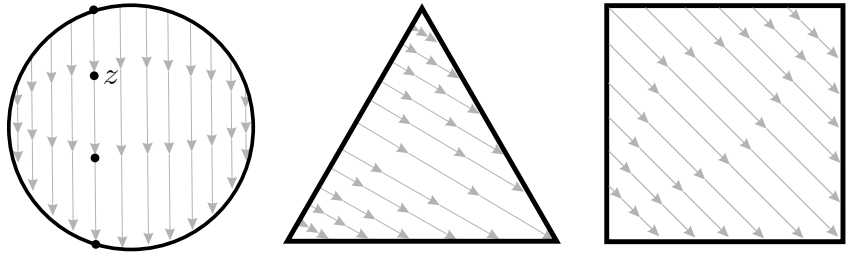}
	\caption{Diagrams of ARL-maps with fixed points on the boundary of the domain.}\label{fig:disc_tri_sqr}
\end{figure}

Indeed, for every $z\in B(0,1)\setminus S^1$ 
\begin{itemize}
	\item the point $\phi(z)$ lies in the segment between points $z$ and $a(z)$, so $|\phi(z)-a(z)|<|z-a(z)|$
	\item $a(\phi(z))=a(z)$ and $b(\phi(z))=b(z)$
	\item $\phi^n(z)=\big|\frac{z-a(z)}{b(z)-a(z)}\big|^{2n-1}(z-a(z))+a(z)$ for every $n\in\N^+$, so $\lim_{n\to\infty}\phi^n(z)= a(z)$
	\item for every $z\notin S^1$ the point $b(z)$ is a local repellor with a witnessing sequence $(\phi^{-n}(z))_{n\in\N}$.
\end{itemize}

Thanks to the Lemma \ref{hom} we can obtain an ALR-map on every set homeomorphic to the closed unit ball, like in the Figure \ref{fig:disc_tri_sqr}.

\section{Examples of pointwise and nonstrict attractors}

	Still let $\phi$ be a continuous function on the Hausdorff space $X$. Looking for maps satisfying the assumptions of Corollary \ref{col_main} we noted the following
	
\begin{lemma}\label{lem_retr}
	Let $A=\bigcup_{k=1}^m A_k$ for some natural $m\geq 1$ and for every $k=1,\dots,m$, the set $A_k$ is a retract of $X$ (by the retraction $r_k\colon X\to A_k$) and a pointwise attractor of IFS $(X,W_k)$.
	Then for $\F=\bigcup_{k=1}^m\{w\circ r_k; w\in W_k\}$ the set $A$ is a pointwise attractor for IFS $(X,\F)$ and $\F(X)= A$. 
\end{lemma}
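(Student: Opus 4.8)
The plan is to prove both assertions, and for the attractivity part I would in fact establish the stronger fact that $B_p(A,\F)=X$. Throughout I would use that each $A_k$, being a pointwise attractor of $(X,W_k)$, is a fixed point of the operator $W_k$, so $W_k(A_k)=A_k$, and that $A_k\subset\int B_p(A_k,W_k)\subset B_p(A_k,W_k)$; the latter says every point of $A_k$ is attracted to $A_k$, i.e. $\lim_{n\to\infty}W_k^n(z)=A_k$ for each $z\in A_k$. The identity $\F(X)=A$ would then follow at once from $r_k(X)=A_k$:
\[
\F(X)=\bigcup_{k=1}^m\bigcup_{w\in W_k}w(r_k(X))=\bigcup_{k=1}^m W_k(A_k)=\bigcup_{k=1}^m A_k=A.
\]

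For the pointwise attractivity I would fix an arbitrary $x\in X$ and show $\lim_{n\to\infty}\F^n(x)=A$ by squeezing the Kuratowski limit. The upper estimate is immediate: since $\F(S)\subset\F(X)=A$ for every $S\subset X$, we have $\F^n(x)\subset A$ for all $n\geq 1$, and as $A$ is compact, hence closed, every accumulation point of points chosen from the sets $\F^n(x)$ lies in $A$; thus $\Ls\F^n(x)\subset A$.

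The lower estimate carries the real content, and here is the main idea. For each $k$ I would isolate the subfamily $\F_k:=\{w\circ r_k; w\in W_k\}\subset\F$. Because $r_k$ restricts to the identity on $A_k$ and $W_k(A_k)=A_k$, an easy induction gives $\F_k^n(x)=W_k^n(r_k(x))\subset A_k$ for every $n\geq 1$: once the orbit has entered $A_k$ the retraction acts trivially and $\F_k$ behaves exactly as $W_k$. Since $r_k(x)\in A_k\subset B_p(A_k,W_k)$, we get $\lim_{n\to\infty}W_k^n(r_k(x))=A_k$ in the Vietoris topology; as all these sets lie in the compact set $A_k$, Remark~\ref{remlim}(\ref{rem_compsup}) turns this into Kuratowski convergence, so $\Li W_k^n(r_k(x))=A_k$. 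Finally $\F_k\subset\F$ yields $\F^n(x)\supset\F_k^n(x)=W_k^n(r_k(x))$, and Remark~\ref{remlim}(\ref{rem_subset}) gives
\[
\Li\F^n(x)\;\supset\;\bigcup_{k=1}^m\Li W_k^n(r_k(x))\;=\;\bigcup_{k=1}^m A_k\;=\;A.
\]
Combining the two estimates produces $\Lim\F^n(x)=A$, and since $\F^n(x)\in\K(A)$ for $n\geq 1$, a last use of Remark~\ref{remlim}(\ref{rem_compsup}) converts this back into Vietoris convergence, so $x\in B_p(A,\F)$.

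The delicate part will not be any single estimate but the passage between Vietoris and Kuratowski convergence without assuming $X$ regular; the key observation that makes this routine is that every orbit lands, after one step, inside the compact sets $A_k\subset A$, so the compact-ambient equivalence of Remark~\ref{remlim}(\ref{rem_compsup}) is always at hand. Granting $B_p(A,\F)=X$, we conclude $A\subset X=\int B_p(A,\F)$, so $A$ is a pointwise attractor of $(X,\F)$.
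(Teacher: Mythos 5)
Your proposal is correct and follows essentially the same route as the paper: the same decomposition of $\F$ into the subfamilies $\F_k=\{w\circ r_k;\,w\in W_k\}$, the same key identity $\F_k^n(x)=W_k^n(r_k(x))$ (valid since $W_k(A_k)=A_k$ and $r_k$ is the identity on $A_k$), and the same sandwich $\bigcup_k\F_k^n(x)\subset\F^n(x)\subset A$. The only cosmetic difference is that you run the squeeze through Kuratowski limits $\Li/\Ls$ with two applications of Remark~\ref{remlim}(\ref{rem_compsup}), whereas the paper stays in the Vietoris topology, using Remark~\ref{remlim}(\ref{rem_union}) for the union of limits and Remark~\ref{remlim}(\ref{rem_trzyciagi}) for the squeeze; both are sound.
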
	

\begin{proof}
	Define $\tilde{W}_k:=\{w\circ r_k; w\in W_k\}$ for every $k=1,\dots,m$ and $\F:=\bigcup_{k=1}^m\tilde{W}_k$. Note that  $$\tilde{W}_k(X)=W_k(r_k(X))=W_k(A_k)=A_k,$$
	for each $k$, so also $\F(X)=A$. We show that $A$ is a pointwise attractor of $(X,\F)$ and $B_p(A,\F)=X$. Take $x\in X$ and note that  
	$\bigcup_{k=1}^m\tilde{W}^n_k(x)\subset \F^n(x) \subset A$ for every $n\in\N$.
	Moreover, 
	$$\lim_{n\to\infty}\bigcup_{k=1}^m\tilde{W}^n_k(x)=\bigcup_{k=1}^m\lim_{n\to\infty}\tilde{W}^n_k(x)=\bigcup_{k=1}^m\lim_{n\to\infty}{W}^n_k(r_k(x))=\bigcup_{k=1}^m A_k=A.$$ 
	This implies that $\lim_{n\to\infty}\F^n(x)=A$, so $A$ is a pointwise attractor of $(X,\F)$.
\end{proof}	

Consequently, by the Corollary \ref{col_main}  we have the following main theorem.
\begin{theorem}\label{main_thm}
	Let $\phi\colon X\to X$ be an attracting map with local repellor such that $\Fix(\phi)$ is a finite union of sets each of which is a retract of $X$ and a pointwise attractor on $X$. Then there exists a family $\F$ containing $\phi$, such that $\Fix(\phi)$ is a pointwise attractor of IFS $(X,\F)$ with a pointwise basin $X$ but it is not a strict attractor of $(X,\F)$.
\end{theorem}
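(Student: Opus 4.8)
The plan is to assemble the statement from two results already proved: Lemma \ref{lem_retr}, which manufactures an IFS whose Barnsley--Hutchinson image lands inside $\Fix(\phi)$, and Corollary \ref{col_main}, which then adds $\phi$ to destroy strictness while preserving pointwise attractivity. First I would record, for each $k=1,\dots,m$, a finite family $W_k$ of continuous self-maps of $X$ witnessing that $A_k$ is a pointwise attractor on $X$, together with a retraction $r_k\colon X\to A_k$ supplied by the hypothesis that $A_k$ is a retract of $X$. With the decomposition $\Fix(\phi)=\bigcup_{k=1}^m A_k$, the pair of hypotheses of Lemma \ref{lem_retr} is met verbatim.

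Applying Lemma \ref{lem_retr} to $A=\Fix(\phi)$ produces the family $W:=\bigcup_{k=1}^m\{w\circ r_k;\ w\in W_k\}$ for which $\Fix(\phi)$ is a pointwise attractor of $(X,W)$ with pointwise basin equal to all of $X$, and moreover $W(X)=\Fix(\phi)$. In particular $W(X)\subset\Fix(\phi)$, so the standing hypotheses of Corollary \ref{col_main} are satisfied by the ALR-map $\phi$ and the family $W$. Setting $\F:=W\cup\{\phi\}$ — a family that contains $\phi$ — Corollary \ref{col_main} then yields at once that $\Fix(\phi)$ is a pointwise attractor of $(X,\F)$ but \emph{not} a strict one.

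It remains only to verify that the pointwise basin is exactly $X$. Here I would reuse the squeezing argument behind the (unnamed) theorem immediately preceding Corollary \ref{col_main}: for every $x\in X=B_p(\Fix(\phi),W)$ one has the inclusions $W^n(x)\subset\F^n(x)\subset\Fix(\phi)\cup\{\phi^n(x)\}$, the lower term converging to $\Fix(\phi)$ because $x$ lies in the basin of $(X,W)$, and the upper term converging to $\Fix(\phi)$ because $\phi$ is attracting, so $\phi^n(x)$ accumulates in $\Fix(\phi)$. By Remark \ref{remlim}(\ref{rem_trzyciagi}) this squeezes $\lim_{n\to\infty}\F^n(x)=\Fix(\phi)$, giving $B_p(\Fix(\phi),\F)=X$.

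I do not expect a genuine obstacle: the theorem is in essence a bookkeeping assembly of the earlier lemmas. The only two points needing care are, first, that ``$A_k$ is a pointwise attractor on $X$'' genuinely furnishes an IFS $(X,W_k)$ with $A_k$ contained in the interior of its basin, so that $r_k(x)\in A_k\subset B_p(A_k,W_k)$ and the identity $\lim_{n\to\infty}W_k^n(r_k(x))=A_k$ invoked inside Lemma \ref{lem_retr} is legitimate; and second, that adjoining the single (possibly expansive) map $\phi$ does not shrink the basin below $X$ — both of which are dispatched by the retraction and squeezing observations above.
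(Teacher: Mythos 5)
Your proposal is correct and follows exactly the paper's route: apply Lemma \ref{lem_retr} to $A=\Fix(\phi)=\bigcup_k A_k$ to get $W$ with $W(X)=\Fix(\phi)$ and pointwise basin $X$, then feed this into Corollary \ref{col_main} with $\F=W\cup\{\phi\}$ (the paper states the theorem as an immediate consequence of these two results). Your explicit verification via the squeeze $W^n(x)\subset\F^n(x)\subset\Fix(\phi)\cup\{\phi^n(x)\}$ that the basin remains all of $X$ is precisely the argument of the unnamed theorem preceding Corollary \ref{col_main}, just spelled out rather than cited.
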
	

	


	As a corollary of the above theorem, using ALR-maps on intervals and arcs we can construct several examples of pointwise attractors which are not strict.
	
	\begin{example}\label{ex_R}
		If $A$ is a finite union of closed arcs or singletons in $S^1$, then there exists an IFS $(S^1,\F)$ for which $A$ is not a strict attractor but it is a pointwise attractor.
	
	Let $A=\bigcup_{k=1}^m A_k$ is a nonempty, finite union of singletons and closed arcs $A_k$. We can assume that the above union is disjoint. Each set $A_k$ is a pointwise (and strict) attractor and a retract of $S^1$. 
	The set $S^1\setminus A$ is a disjoint union of $m$ open arcs $I_k = (a_k, b_k)$. For every $k=1,\dots,m$ we can find an ALR-map $\phi_k$ on $\overline{I_k}$ such that $\Fix(\phi_k)=\{a_k, b_k\}$ (see Example \ref{pfarc}). Then, by the Lemma \ref{union}, the union $\phi=\bigcup_{k=1}^m\phi_k \cup id_A$ is an ALR-map on $S^1$ (see Figure \ref{okrag}). Thus, by the Theorem \ref{main_thm}, the set $A=\Fix(\phi)$ is a pointwise but not strict attractor for some IFS.
	\end{example}
	\begin{remark}	
		For finite set $A\subset S^1$ we can obtain the same result with the family $\F=	\{\phi,\psi\}$ contains only two maps: $\phi$ constructed in Example \ref{ex_R} and the map $\psi=\phi\circ w$ (or $w\circ\phi$) where $w$ is a continuous, piecewise linear function on $S^1$ such that $w|_{I_k}\colon I_k\to I_{(k+1)mod{(m)}}$ for every $k=1,\dots,m$ preserves the ratio of arc lengths. See Figure \ref{okrag}.
	\end{remark}	
	
	\begin{figure}[h]
		\includegraphics[scale=0.5]{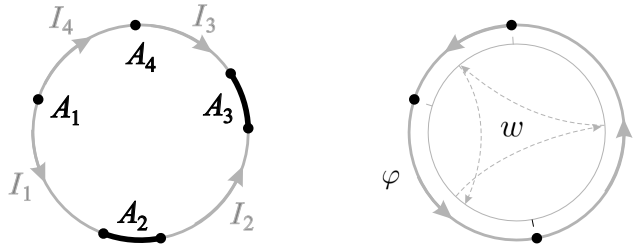}
		\caption{Pointwise, nonstrict attractors on $\hat{\R}$.}\label{okrag}
	\end{figure}
	
	\begin{example}\label{ex_RR}
		If $A$ is a finite union of at least two closed intervals or singletons in $\R$, then there exists an IFS for which $A$ is not a strict attractor but it is a pointwise attractor.
	\end{example}

	Take the notation from the previous example. Note that now the complement $\R\setminus A$ contains $m+1$ intervals $I_k=(a_k,b_k)$ for $k=0,\dots,m$. Assume that $-\infty=a_0< b_0\leq a_1< b_1\leq ...\leq a_m<b_m=+\infty$. For every $k=1,...,m-1$ take the ALR-map $\phi_k$ on $[a_k,b_k]$ like in Example \ref{ex_A}. Then we define $\phi\colon\R\to\R$ such that
	$$\phi(x)=
	\begin{cases}
		b_0 &\text{for }x< b_0\\
		\phi_k(x) &\text{for }x\in (a_k,b_k) \text{ and }k=1,\dots,m-1\\
		x & \text{for }x\in A \\
		a_m &\text{for }x> a_m.
	\end{cases}$$
	
	By the Lemma \ref{union} $\phi$ is an ALR-map on $\R$ and $A=\Fix(\phi)$ is a pointwise but not strict attractor (see Figure \ref{prosta}).
		
\begin{remark}
	In fact it is enough to take the following map
	
	$$\phi(x)=
	\begin{cases}
		a_1 &\text{for }x< a_1\\
		\phi_1(x) &\text{for }x\in [a_1,b_1]\\
		b_1 &\text{for }x> b_1.
	\end{cases}$$
	
	It is an ALR-map on $\R$ but it may not satisfy the assumption of Theorem \ref{main_thm} cause $\Fix(\phi)=\{a_1,b_1\}$. Nevertheless, the proof of pointwise attractivity works for that map due to the fact that the image $\im(\phi\circ w)\subset \Fix(\phi)\subset A$ for every map $w\in\F\setminus\{\phi\}$.
\end{remark}

	\begin{figure}[h]
		\includegraphics[scale=0.5]{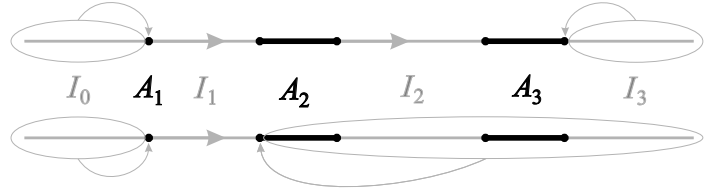}
		\caption{A pointwise, nonstrict attractor on the real line and two variants of the map $\phi$.}\label{prosta}
	\end{figure}

\begin{example}
	The finite union of singletons, curves and sets homeomorphic to the unit square in $\C$ or the sphere $S^2$ is a pointwise, nonstrict attractor of the IFS contains the ALR-map $\phi$ which diagrams are presented in the Figure \ref{sfera}.
\end{example}
	\begin{figure}[h]
		\includegraphics[scale=0.7]{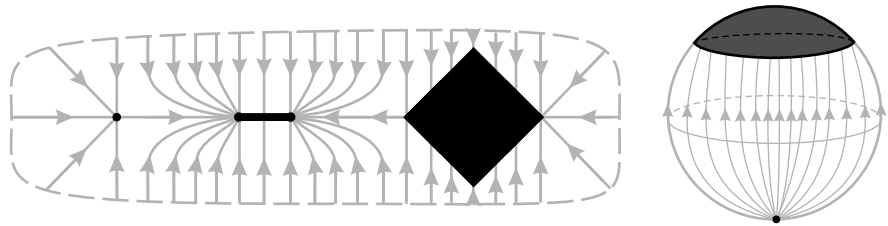}
		\caption{Pointwise, nonstrict attractors on $\C$ and $\hat{\C}$.}\label{sfera}
	\end{figure}

\section{Classical fractals as no strict attractors}

The results obtained in the previous section cannot be applied to many classical IFS-attractors that are not a finite union of retracts. Therefore, we had to develop some new approach were the added function $\phi$ must cooperate with other maps from IFS (for example, compositions of that maps must be commutative on a certain set).

Recall that, for $W, V$ two families of maps, we denote by $W\circ V=\{w\circ v; w\in W, v\in V\}$ the set of all compositions of maps from both sets. Inductively define $W^0=\{id\}$, $W^{n+1}=W^n\circ W$ and $W^{<\w}=\bigcup_{n\in\N} W^n$. 
In order to avoid notation with multiple indexes, we will use the following symbols for subsequences and their limits. For a given sequence $(x_n)_{n\in\N}$ and an infinite $N\subset \N$, by $(x_n)_{n\in N}$ we denote the subsequence of $(x_n)_{n\in\N}$ and its limit (if exists) by
$\lim_{n\in N}x_n.$

\begin{theorem}\label{thm_cantor}
	If $A$ is a pointwise attractor of $(X,W)$ on the regular Hausdorff space $X$ and there exist a compact set $D$ such that 
	\begin{enumerate}[1.]
		\item $A\subset W(D)\subset D\subset B_p(A,W)$
		\item $D\setminus A$ is contained in the union of a family $\I$ of disjoint open sets such that for every $I\in\I$ the set $W_I:=\{w\in W^{<\omega}; w(D)\cap I\neq\emptyset\}$ is finite
		\item there exists an ALR-map $\phi\colon X\to X$ such that
		\begin{enumerate}[a)]
			\item $\Fix(\phi)= A$ 
			\item $\phi(X)\subset D$ and $\phi(I)= I$ for every $I\in\I$
			\item $\phi\circ w(x)=w\circ\phi(x)$ for every $x\in D$ and $w\in W$ (commutativity on $D$)
		\end{enumerate}
	\end{enumerate} 
then $A$ is a pointwise attractor but not a strict attractor for IFS $(X,W\cup\{\phi\})$.
\end{theorem}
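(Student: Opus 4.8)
The plan is to prove the two assertions separately: non-strictness is immediate, while pointwise attractivity carries all the weight.

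For non-strictness I would simply invoke the machinery of Section~\ref{sec_pf}. By~3a) the map $\phi$ is an ALR-map with $\Fix(\phi)=A$, so it has a local repellor $a\in A$; hence Corollary~\ref{col_rep} applies verbatim to the family $W\cup\{\phi\}$ and shows that $A=\Fix(\phi)$ is not a strict attractor of $(X,W\cup\{\phi\})$. Conditions~1 and~2 play no role here.

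For pointwise attractivity I would write $\F=W\cup\{\phi\}$ and first record that $D$ is forward $\F$-invariant: $\phi(X)\subset D$ forces $\phi(D)\subset D$, while $W(D)\subset D$ by condition~1, so $\F^n(x)$ is a finite (hence compact) subset of $D$ for every $x\in D$. The engine is the commutativity~3c): since $D$ is $\F$-invariant and $\phi w=w\phi$ on $D$ for each $w\in W$, any word in the letters $W\cup\{\phi\}$ evaluated at a point of $D$ can be sorted by moving all occurrences of $\phi$ innermost. I would prove from this that, for every $x\in D$,
\[
\F^n(x)=\bigcup_{k=0}^{n}W^{n-k}(\phi^k(x)),
\]
with $\phi^0=\id$ (the inclusion $\supseteq$ is read off the word $\phi^k\circ w$, and $\subseteq$ from the sorting argument, using that the first application of $\phi$ already lands in $D$). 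For the lower Kuratowski bound the summand $k=0$ is $W^n(x)$, and since $x\in D\subset B_p(A,W)$ we have $\lim_n W^n(x)=A$ in the Vietoris topology, so by Remark~\ref{remlim}(\ref{rem_reg}) $A=\Li W^n(x)\subset\Li\F^n(x)$. The upper bound $\Ls\F^n(x)\subset A$ is where condition~2 is used: if $p\in\Ls\F^n(x)$ with $p\notin A$, then $p\in D\setminus A$, so $p\in I$ for some $I\in\I$, and along a subsequence there are $p_n=w_n(\phi^{k_n}(x))\in I$ with $w_n\in W^{n-k_n}$ and $p_n\to p$. Because $\phi^{k_n}(x)\in D$, the relation $p_n\in w_n(D)\cap I$ forces $w_n\in W_I$, which is \emph{finite}; hence the lengths $n-k_n$ are bounded, $k_n\to\infty$, and $w_n$ takes finitely many values. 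On a further subsequence with $w_n\equiv w$, attractivity of $\phi$ gives $\phi^{k_n}(x)\to x_\infty:=\lim_k\phi^k(x)\in\Fix(\phi)=A$, so by continuity $p=w(x_\infty)$; since $W(A)=A$ (an attractor is a fixed point of the Hutchinson operator) this lies in $A$, a contradiction. Thus $\Lim\F^n(x)=A$, and as $\F^n(x)\in\K(D)$ Remark~\ref{remlim}(\ref{rem_compsup}) upgrades this to Vietoris convergence, giving $D\subset B_p(A,\F)$.

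The hard part will be the final passage from $D\subset B_p(A,\F)$ to the interior condition $A\subset\int B_p(A,\F)$. The computation above used $x\in D$ in an essential way: only then can every copy of $\phi$ be commuted inward to act on the single fixed argument $x$, so that $\phi^{k_n}(x)$ is \emph{one} convergent orbit. For $x\notin D$ the innermost $\phi$ instead acts on a moving point $z_n\in W^{n-m_n}(x)$, and one is left with a diagonal limit $\lim_n\phi^{k_n}(z_n)$ in which $k_n\to\infty$ while $z_n$ only approaches $A$; near a local repellor of $\phi$ in $A$ such a limit need not land in $A$, which is precisely the obstruction behind Example~\ref{ex_kwietniak}. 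I therefore expect the decisive point to be that $D$ is a neighborhood of $A$: granting $A\subset\int D$ (as holds in the intended applications, e.g.\ when $D$ is the whole compact ambient space), the inclusion $D\subset B_p(A,\F)$ yields $A\subset\int D\subset\int B_p(A,\F)$ and finishes the proof. In short, the sorting formula and the $W_I$-finiteness argument are routine once set up, and the genuine difficulty is controlling the basin off $D$ — equivalently, ensuring that $D$ surrounds $A$.
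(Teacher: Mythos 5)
Your proof of non-strictness is exactly the paper's: Corollary~\ref{col_rep} applied to the ALR-map $\phi$. Your sorting formula and the $W_I$-pigeonhole for points of $D$ are also correct, and they are in substance the paper's ``Case 2'' specialized to $x\in D$: for such $x$ every occurrence of $\phi$ commutes onto the single orbit $(\phi^k(x))_k$, the outer $W$-word is frozen because $W_I$ is finite, and attractivity of $\phi$ together with $W(A)=A$ gives the contradiction; Remark~\ref{remlim}~(\ref{rem_compsup}) then upgrades the Kuratowski limit to a Vietoris limit. So everything you actually prove, through $D\subset B_p(A,\F)$, is sound and faithful to the paper's method. The divergence is the final step: the paper does \emph{not} assume $A\subset\int D$, but instead claims the stronger inclusion $B_p(A,W)\subset B_p(A,\F)$, which yields $A\subset\int B_p(A,W)\subset\int B_p(A,\F)$. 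For $x\notin D$ commutativity is unavailable until the first application of $\phi$, so the paper sorts each word into three blocks, $f_n=w_n\circ\phi^{j_n}\circ v_n$ with an inner block $v_n\in W^{s_n}$ acting on $x$, pigeonholes the outer block $w_n$ through $W_I$ exactly as you do, and then must also freeze the inner block: it argues, by separating $U\ni y$ from $V\supset A$, that $s_n$ must stay below some $s_0$, after which a second pigeonhole makes $f_n(x)$ a tail of one $\phi$-orbit converging into $\Fix(\phi)=A$.

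That step of the paper is precisely the diagonal obstruction you flagged, and its justification does not hold up: writing $u_n=\mathbf{w}\circ\phi\circ v_n(x)$, from $f_n(x)=\phi^{m_n}(u_n)\in I\cap U$ the hypotheses $\phi(I)=I$ and disjointness of $\I$ give only $u_n\in I$, not the needed $u_n\in U$, and a long middle run of $\phi$ can carry $u_n$ from $V\cap I$ (near $A$) across $I$ into $U$ --- exactly the local-repellor behaviour. The failure is real, not cosmetic. In the paper's own Cantor example take $\phi_0(t)=3(t-\tfrac13)^2+\tfrac13$ on $\overline{I_0}$, $I_0=(\tfrac13,\tfrac23)$ (repellor at $\tfrac23$), $g_0(t)=\tfrac t3$, $g_1(t)=\tfrac{t+2}3$, and any $x\in(-1,0)$: the word $v_s:=g_1\circ g_0^{s-1}\in W^s$ sends $x$ to $\tfrac23+\tfrac{x}{3^s}\in I_0$, a point approaching the repellor, and if $m_s$ is the first time its $\phi_0$-orbit drops below $0.55$, then $\phi^{m_s}\circ v_s(x)\in\F^{s+m_s}(x)$ lies in the fixed interval $[\phi_0(0.55),0.55]\subset I_0$ while $s+m_s\to\infty$; hence $\Ls\F^n(x)$ meets $I_0$ and $x\notin B_p(A,\F)$. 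Such $x$ accumulate at $0\in A$, so the interior condition itself fails: the theorem as stated is false, and the flaw sits exactly where you predicted. One correction to your closing remark, though: $A\subset\int D$ does \emph{not} hold in the intended applications (for the Cantor set, $0,1\in A$ lie on the boundary of $D=[0,1]$; in fact no compact $D$ with $A\subset\int D$ can satisfy hypothesis~2, since $\int D$ would contain some $(-\delta,0)$, which by connectedness lies in a single $I\in\I$, forcing $g_0^s\in W_I$ for all large $s$). So your patched statement is the provable theorem, but it is a genuinely different one that does not recover the paper's fractal examples --- those, as stated, are not pointwise attractors of the augmented IFS.
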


\begin{proof}
	Denote $\F=\tilde{W}\cup\{\phi\}$ and observe that $\F(D)\subset D$. By the Corollary \ref{col_rep} we immediately have that $A$ is not a strict attractor of $(X,\F)$. Now we will show that $A$ is a pointwise attractor of $(X,\F)$ and $B_p(A,\F)\supset B_p(A,W)$. We divide this proof into several steps. \\
	\newline
	\textbf{STEP 1:} $A\subset \Li\F^n(x)$ for every $x\in B_p(A,W)$.
	
	We obtain this fact by Remark \ref{remlim} (\ref{rem_reg}) and (\ref{rem_subset}). Indeed
	$A=\lim_{n\to\infty} W^n(x)=\Li {W}^n(x) \subset \Li\F^n(x).$\\
	\newline
	\textbf{STEP 2:} $\Ls\F^n(x)\subset A$ for every $x\in B_p(A,W)$.
	
	Take $x\in B_p(A,W)$ and $y\in\Ls\F^n(x)$, which means that $y=\lim_{n\in N}f_n(x)$ for some $(f_n)_{n\in N}$ such that $f_n\in\F^{n}$ and for infinite set $N\subset\N$. Let us exclude two cases:\\
	\newline
	{Case 1.} $y\notin D$. Then, by regularity of $X$, we can find two disjoint open sets: $V\supset D$ and $U\ni y$. The set $V$ is also open neighborhood of the attractor $A$ so for $n\in N$ big enough $W^n(x)\subset V$ and
	$$f_n(x)\in \F^n(x)\subset W^n(x)\cup D \subset V,$$
	thus such points cannot be contained in the set $U$. This leads to a contradiction to $y=\lim_{n\in N}f_n(x)$.\\
	\newline
	{Case 2.} $y\in D\setminus A$. Then, for all but finite $n\in N$, in the composition $f_n$ the map $\phi$ appears at least once. Otherwise $f_n\in W^n$ for $n$ from some infinite set $M\subset N$ and 
	$$y= \lim_{n\in M}f_n(x)\in \lim_{n\in M}W^n(x)=A.$$
	This means that, by commutativity on $D\supset\phi(X)$, for $n\in N$ big enough we must have 
	$$f_n(x) = w_n \circ \phi^{n-k_n-s_n}\circ v_n(x)\in w_n(D),$$
	where $w_n\in W^{k_n}, v_n\in W^{s_n}$ and $0\leq k_n+s_n<n$ for some sequences of natural numbers $(k_n)_{n\in N}, (s_n)_{n\in N}$. 
	Since $y\in D\setminus A=\bigcup \I$, then $y\in I$ for some $I\in \I$ and also $f_n(x)\in I$ for $n\in N$ big enough. Hence, for every $n$ from some infinite set $N'\subset N$ we have $f_n(x)\in I \cap w_n(D)$, so also $w_n$ must be an element of the finite set $W_I$. Here we will use the so-called pigeonhole principle, which states that, if the expressions of a given sequence are in a finite union of sets, then one of these sets must contain its subsequence. Thus, there exists a continuous map $\textbf{w}\in W_I$ which is a composition from $W^K$ for some natural $K$, such that for infinitely many $n$ (say for $n$ from some infinite set $N''\subset N'$) 
	$$f_n(x) = \textbf{w}\circ\phi^{n-K-s_n}\circ v_n(x)=\phi^{n-K-s_n-1}\circ\textbf{w}\circ\phi\circ v_n(x),$$
	thanks of the commutativity on $D$. 
	
	From the regularity of $X$, take an open disjoint sets $U\ni y$ and $V\supset A\supset\textbf{w}(\phi(A))$. Then by the continuity of $\textbf{w}\circ\phi$ there exists $V'\supset A$ such that $\textbf{w}(\phi(V'))\subset V$. According to the fact that $A$ is a pointwise attractor of $(X,W)$, there exists $s_0\in\N$ such that for all but finite $s\geq s_0$ we have $W^s(x)\subset V'$ and also $\textbf{w}(\phi(W^s(x)))\subset V$. Thus, $\textbf{w}\circ\phi\circ v_n(x)\in U\cap I$ only for finitely many maps $v_n\in \bigcup_{s<s_0}W^s$. For those functions and $n\in N''$ points $f_n(x)=\phi^{n-K-s_n-1}\circ\textbf{w}\circ\phi\circ v_n(x)$ have a chance to be an elements of the set $I$. Then again, by the pigeonhole principle, there exists a constant $S<s_0$ and a map $\textbf{v}\in W^S$ such that $f_n(x)=\phi^{n-K-S-1}\circ\textbf{w}\circ\phi\circ\textbf{v}(x)\in I$ for $n$ from infinite set $N'''\subset N''$. This means that, taking $z:=\textbf{w}\circ\phi\circ\textbf{v}(x)$ we obtain
	$$y=\lim_{n\in N'''}f_n(x)=
	\lim_{n\in N'''}\phi^{n-K-S-1}(z) =
	\lim_{n\to \infty}\phi^{n}(z)
	\in\Fix(\phi)=A$$
	which is a contradiction. 
		
	This means that $y$ have to be an element of the set $A$, so $\Ls\F^n(x)\subset A$. Steps 1, 2 and the fact that $\Li\F^n(x)\subset\Ls\F^n(x)$ imply that $\Lim \F^n(x)=A$.\\ 
	\newline
	\textbf{STEP 3:} $A=\lim_{n\to\infty}\F^n(x)$.
	
	We obtain this fact by Remark \ref{remlim} (\ref{rem_compsup}). Indeed, for every $n\in\N$ the finite set $\F^n(x)$ is a subset of $D\cup\bigcup_{k\in\N} W^k(x)$, which is a compact set cause $\lim_{k\to\infty}W^k(x)= A\subset D$. This ends the proof.
\end{proof}	

Despite the long list of assumptions in the above theorem, there are several examples that satisfy them. It is known that the classical fractal like ternary Cantor set, Sierpiński triangle and Sierpiński carpet are strict and pointwise attractors. Our result says that, adding only one nonexpansive map to the IFS, we can spoil their strict attractivity.

\begin{example}
	The Cantor set in $\R$ or $\hat{\R}$ has an IFS for which it is a pointwise attractor but not a strict attractor.
\end{example}

Let $A$ be the ternary Cantor set in the unit interval $D:=[0,1]$. Take the standard IFS for the Cantor set: $W=\{\frac{x}3, \frac{x+2}3\}$ on $\R$ (or their continuous extension to $\hat{\R}$). The pointwise basin $B_p(A,W)=\R \supset D \supset W(D) \supset A$. The set $D\setminus A$ is a union of countable family $\I$ of disjoint open intervals.
Note that the set $W_I:=\{w\in W^{<\omega}; w([0,1])\cap I\neq\emptyset\}$ is finite for every $I\in\I$. Indeed for $I\in\I$ there exists a number $N$ such that  $W^N(D)\cap I=\emptyset$ for every $n\geq N$ cause $W^n(D)$ is the $n$-th step of the standard construction of the ternary Cantor set. 

Denote $I_0=(\frac13,\frac23)$ and observe that for each $I\in\I$ there exists a unique affine bijection  $w_I$ in $W_I$ such that $w_I(I_0)=I$. Moreover for every $w\in W^{<\w}$ the set $w(I)$ is always an element of $\I$, so 
\begin{align}
	\label{aff}
	w_{w(I)}=w\circ w_I,
\end{align}
cause $w\circ w_I(I_0)=w(I)$.

Let us define a map $\phi$ satisfying the last assumption of the Theorem \ref{thm_cantor}. Take an arbitrary ALR-map $\phi_0\colon \overline{I_0}\to \overline{I_0}$ such that $\Fix(\phi_0)=\{\frac13,\frac23\}$ (like in the Example \ref{ex_A}). Then
 $$\phi(x)=
 \begin{cases}
 x &\text{for }x\in A,\\
 w_I\circ \phi_0\circ w_{I}^{-1}(x) &\text{for }x\in I \text{ where }I\in\I,\\
  r(x) &	\text{for }x\in \R\setminus D~(\text{or }\hat{\R}\setminus D).
 \end{cases}$$
 where $r$ is an arbitrary retraction on $[0,1]$. It is easy to check that $\phi$ is an ARL-map and $A=\Fix(\phi)$. Moreover the image $\im\phi=D$ and $\phi(I)=I$ for every interval $I\in\I$. The commutativity of maps on $D$ goes form the equation (\ref{aff}).
 Indeed, for an element $x\in D=[0,1]$ and the arbitrary map $w\in W$ if $x\in A$ then $w(x)\in A$ so $\phi(w(x))=w(x)=w(\phi(x))$. If $x\in D\setminus A$ then $x\in I$ for some $I\in\I$ and
 $$w\circ\phi(x)=w\circ w_I\circ \phi_0\circ w_I^{-1}(x) = w_{w(I)}\circ\phi_0 \circ w_{w(I)}^{-1}\circ w(x)=\phi\circ w(x).$$
 
 All assumptions of the Theorem \ref{thm_cantor} are satisfied which means that the ternary Cantor set $A$ is a pointwise but not strict attractor for IFS $(\R,W\cup\{\phi\})$ and $(\hat{\R},W\cup\{\phi\})$.

 The similar construction can be made also for the Sierpiński triangle and the Sierpiński carpet. Then the set $D$ should be a convex hull of the fractal and $\phi_0$ is an arbitrary ALR-map on the closure of biggest "hole" $I_0$ (see Figure \ref{fraktaleR}).
 
 \begin{example}
 	The Sierpiński triangle and the Sierpiński carpet can be pointwise but no strict attractors for some IFS $(\C,\F)$ or  $(S^2,\F)$.
 \end{example}
 
 \begin{figure}[h]
 	\includegraphics[scale=0.7]{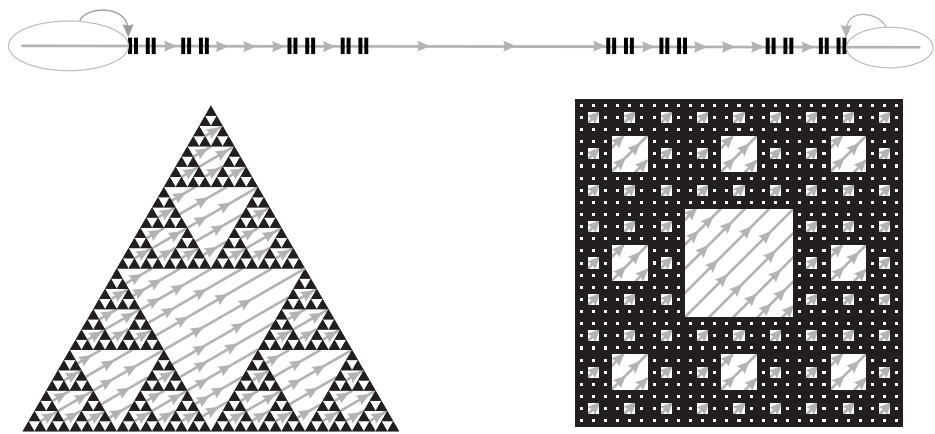}
 	\caption{Diagrams of ALR-maps on the classical fractals}\label{fraktaleR}
 \end{figure}

\begin{remark}
	The construction of $\phi$ in the examples above can be simplified such that
	$\phi(x)=\phi_0\circ r_0$ where $r_0$ is an arbitrary retraction to $\overline{I_0}$. Then $\phi$ is not satisfied the last assumption of Theorem \ref{thm_cantor} but it can also be shown that the fractal $A$ is still pointwise but not a strict attractor. The proof of that fact is similar to the proof of Theorem \ref{thm_cantor} so we decided to omit it from this article.
\end{remark}

\section*{Acknowledgements}

The author would like to thank Krzysztof Leśniak and Piotr Niemiec for many fruitful discussions which helped to write this paper.

\end{document}